\newcommand{\R}{\mathbb R}
\newcommand{\C}{\mathbb C}
\newtheorem{theorem}{Theorem} [section]
\newtheorem{corollary}{Corollary} [section]
\newtheorem{definition}{Definition} [section]
\newtheorem{remark}{Remark}[section]
\let\ssection=\section\renewcommand{\section}{\setcounter{equation}{0}\ssection}
\begin{document}
\title ["Universal" inequalities for eigenvalues]{''Universal'' inequalities for the eigenvalues of the
biharmonic operator}
\author{ Sa\"{\i}d Ilias and Ola Makhoul}
\date{15 novembre 2009}

\address{S. Ilias, O. Makhoul: Universit\'e Fran\c{c}ois rabelais de Tours, Laboratoire de Math\'ematiques
et Physique Th\'eorique, UMR-CNRS 6083, Parc de Grandmont, 37200
Tours, France} \email{ilias@univ-tours.fr, ola.makhoul@lmpt.univ-tours.fr}

\keywords{eigenvalues, biharmonic operator, Universal inequalities, submanifolds, eigenmap}
\subjclass[2000]{35P15;58J50;58C40;58A10}

\begin{abstract}
In this paper, we establish universal inequalities for eigenvalues of the clamped plate problem on compact submanifolds of Euclidean spaces, of spheres and of real, complex and quaternionic projective spaces. We also prove similar results for the biharmonic operator on domains of Riemannian manifolds admitting spherical eigenmaps (this includes the compact homogeneous Riemannian spaces) and finally on domains of the hyperbolic space.
\end{abstract}
\maketitle
%%%%%%%%%%%%%%%%%%%%%%%%%%%%%%%%%%%%%%%%%%%%%%%%%%%%%%%%%%%%%%%%%%%%%%%%%
\section{Introduction}
Let $(M,g)$ be a Riemannian manifold of dimension $n$ and let $\Delta$ be the Laplacian operator on $M$.\\
In this paper, we will be concerned with the following eigenvalue problem for Dirichlet biharmonic operator, called the clamped plate problem:
\begin{equation}\label{clamped prob}\begin{cases}
\Delta^2 u=\lambda u & \hbox{in} \,\,\Omega\\
\displaystyle u=\frac{\partial u}{\partial \nu}=0 & \hbox{on}\,\, \partial \Omega,
\end{cases}
\end{equation}
where $\Omega$ is a bounded domain in $M$, $\Delta^2$ the biharmonic operator in $M$ and $\nu$ is the outward unit normal. It is well known that the eigenvalues of this problem form a countable family $0<\lambda_1 \leq \lambda_2 \leq \ldots \rightarrow +\infty$.\\
\indent For the case when $M=\R^n$, in 1956, Payne, Polya and Weinberger \cite{PPW} (henceforth PPW) established the following inequality, for each $k\ge 1$,
\begin{equation*}
\lambda_{k+1}-\lambda_k \leq \frac{8(n+2)}{n^2k}\sum_{i=1}^k\lambda_i.
\end{equation*}
Implicit in the PPW work, as noticed by Ashbaugh in \cite{Ashb}, is the better inequality 
\begin{equation}\label{Ashb}
\lambda_{k+1}-\lambda_k \le \frac{8(n+2)}{n^2k^2}\bigg(\sum_{i=1}^k \lambda_i^{\frac{1}{2}}\bigg)^2.
\end{equation} 
Later, in 1984, Hile and Yeh \cite{HileYeh} extended ideas from earlier work on the Laplacian by Hile and Protter \cite{HileProt} and proved the better bound
\begin{equation*}
 \frac{n^2k^{\frac{3}{2}}}{8(n+2)}\leq \bigg(\sum_{i=1}^k\frac{\lambda_i^{\frac{1}{2}}}{\lambda_{k+1}-\lambda_i}\bigg)\Big(\sum_{i=1}^k\lambda_i\Big)^{\frac{1}{2}}.
\end{equation*}
Implicit in their work is the stronger inequality
\begin{equation*}
 \frac{n^2k^2}{8(n+2)}\leq \bigg(\sum_{i=1}^k\frac{\lambda_i^{\frac{1}{2}}}{\lambda_{k+1}-\lambda_i}\bigg)\Big(\sum_{i=1}^k\lambda_i^{\frac{1}{2}}\Big),
\end{equation*}
which was proved independently by Hook \cite{Hook1} and Chen and Qian \cite{ChenQian3} in 1990 (see also \cite{ChenQian4}, \cite{ChenQian1} and \cite{ChenQian2}).\\In 2003, Cheng and Yang \cite{ChengYang} obtained the following bound
\begin{equation}\label{1}
\sum_{i=1}^k(\lambda_{k+1}-\lambda_i) \leq \bigg[\frac{8(n+2)}{n^2}\bigg]^{\frac{1}{2}}\sum_{i=1}^k\Big[\lambda_i(\lambda_{k+1}-\lambda_i)\Big]^{\frac{1}{2}}.
\end{equation}
Very recently, Cheng, Ichikawa and Mametsuka \cite{Cheng-Ichik-Mamet1} obtained an inequality for eigenvalues of Laplacian with any order $l$ on a bounded domain in $\R^n$. %then in the unit sphere $\mathbb{S}^n(1)$ (\cite{Cheng-Ichik-Mamet2} respectively).
In particular, they showed, for $l=2$,
\begin{equation}\label{2}
 \sum_{i=1}^k (\lambda_{k+1}-\lambda_i)^2\leq \frac{8(n+2)}{n^2}\sum_{i=1}^k(\lambda_{k+1}-\lambda_i)\lambda_i.
\end{equation}
For the case when $M=\mathbb{S}^n$, Wang and Xia \cite{Wang-Xia} showed
\begin{align}\label{WX}
\sum_{i=1}^k(\lambda_{k+1}-\lambda_i)^2
\le\frac{1}{n}\bigg[\sum_{i=1}^k(\lambda_{k+1}&-\lambda_i)^2\Big(n^2+(2n+4)\lambda_i^{\frac{1}{2}}\Big)\bigg]^{\frac{1}{2}}\times\nonumber\\
&\bigg[\sum_{i=1}^k(\lambda_{k+1}-\lambda_i)\
\Big(n^2+4\lambda_i^{\frac{1}{2}}\Big)\bigg]^{\frac{1}{2}},
\end{align}
from which they deduced, using a variant of Chebyshev inequality,
\begin{equation}\label{3}
 \sum_{i=1}^k(\lambda_{k+1}-\lambda_i)^2\leq \frac{1}{n^2}\sum_{i=1}^k(\lambda_{k+1}-\lambda_i)\Big(2(n+2)\lambda_i^{\frac{1}{2}}+n^2\Big)\Big(4\lambda_i^{\frac{1}{2}}+n^2\Big).
\end{equation}
This last inequality was also obtained, by a different method, by Cheng, Ichikawa and Mametsuka (see \cite{Cheng-Ichik-Mamet2}).\\ 
On the other hand, Wang and Xia \cite{Wang-Xia} also considered the problem (\ref{clamped prob}) on domains of an $n$-dimensional complete minimal submanifold $M$ of $\R^m$ and proved in this case
\begin{align}\label{WX2}
 \sum_{i=1}^k(\lambda_{k+1}&-\lambda_i)^2\nonumber\\
\le\bigg(\frac{8(n+2)}{n^2}\bigg)^{\frac{1}{2}}\bigg(\sum_{i=1}^k(\lambda_{k+1}-\lambda_i)^2\lambda_i^{\frac{1}{2}}\bigg)^{\frac{1}{2}}&\bigg(\sum_{i=1}^k(\lambda_{k+1}-\lambda_i)\lambda_i^{\frac{1}{2}}\bigg)^{\frac{1}{2}},
\end{align}
from which they deduced the following generalization of inequality (\ref{2}) to minimal Euclidean submanifolds  
\begin{equation}\label{WX3}
 \sum_{i=1}^k(\lambda_{k+1}-\lambda_i)^{2}\leq \frac{8(n+2)}{n^2}\sum_{i=1}^k(\lambda_{k+1}-\lambda_i)\lambda_i.
\end{equation}
Recently, Cheng, Ichikawa and Mametsuka \cite{Cheng-Ichik-Mamet3} extended this last inequality to any complete Riemannian submanifold $M$ in $\R^m$ and showed 
\begin{equation}\label{CIM}
 \sum_{i=1}^k(\lambda_{k+1}-\lambda_i)^2\le \frac{1}{n^2}\sum_{i=1}^k(\lambda_{k+1}-\lambda_i)\Big(n^2\delta+2(n+2)\lambda_i^{\frac{1}{2}}\Big)\Big(n^2\delta+4\lambda_i^{\frac{1}{2}}\Big),
\end{equation}
with $\delta=\displaystyle{Sup_{\Omega}|H|^{2}}$, where $H$ is the mean curvature of $M$.\\
 The goal of the first section of this article is to study the relation between eigenvalues of the biharmonic operator and the local geometry of Euclidean submanifolds $M$ of arbitrary codimensions. The approach is based on an algebraic formula (see theorem \ref{abstract form 1}) we proved in \cite{Ilias-Mak3}. This approach is useful  for the unification and for the generalization of all the results in the literature. In fact, using this general algebraic inequality, we obtain (see theorem \ref{clamp1}) the following inequality 
 \begin{align}\label{16'}
\sum_{i=1}^k f(\lambda_{i}) \leq \frac{1}{n}\bigg[\sum_{i=1}^k&g(\lambda_{i})\bigg(2(n+2)\lambda_i^{\frac{1}{2}}+n^2\delta\bigg)\bigg]^{\frac{1}{2}}\times \nonumber\\
& \bigg[\sum_{i=1}^k\frac{\big(f(\lambda_i)\big)^2}{g(\lambda_i)(\lambda_{k+1}-\lambda_i)}\bigg(4\lambda_i^{\frac{1}{2}}+n^{2}\delta\bigg)\bigg]^{\frac{1}{2}},
\end{align}
where $f$ and $g$ are two functions satisfying some functional conditions (see Definition \ref{def1}), $\delta=\displaystyle{Sup_{\Omega}|H|^{2}}$ and $H$ is the mean curvature of $M$. We note that the family of such couples of functions is large. And particular choices for $f$ and $g$ lead to the known results. For instance, if we take $f(x)=g(x)=(\lambda_{k+1}-x)^2$, then (\ref{16'}) becomes 
\begin{align}\label{16"}
\sum_{i=1}^k (\lambda_{k+1}-\lambda_{i})^{2} \leq \frac{1}{n}\bigg[\sum_{i=1}^k& (\lambda_{k+1}-\lambda_{i})^{2}\bigg(2(n+2)\lambda_i^{\frac{1}{2}}+n^2\delta\bigg)\bigg]^{\frac{1}{2}}\times \nonumber\\
& \bigg[\sum_{i=1}^k(\lambda_{k+1}-\lambda_i)\bigg(4\lambda_i^{\frac{1}{2}}+n^{2} \delta\bigg)\bigg]^{\frac{1}{2}},
\end{align}
which gives easily (see remark \ref{rem1}) inequality (\ref{CIM}) of Cheng, Ichikawa and Mametsuka.\\
 In the second section, we consider the case of manifolds admitting spherical eigenmaps and obtain similar results. As a consequence, we obtain universal inequalities for the clamped plate problem on domains of any compact homogeneous Riemannian manifold.\\ 
 In the last section, we show how one can easily obtain, from the algebraic techniques used in the previous sections, universal inequalities for eigenvalues of (\ref{clamped prob}) on domains of the hyperbolic space $\mathbb{H}^n$.\\
We also observe that all our results hold if we add a potential to $\Delta^{2}$ (i.e  $\Delta^2+q$ where $q$ is a smooth potential). For instance, in this case instead of inequality (\ref{16'}), we obtain 
\begin{align}\label{16'''}
\sum_{i=1}^k f(\lambda_{i}) \leq \frac{1}{n}\bigg[\sum_{i=1}^k & g(\lambda_{i})\bigg(2(n+2)\overline{\lambda_i}^{\frac{1}{2}}+n^2\delta\bigg)\bigg]^{\frac{1}{2}}\times \nonumber\\
& \bigg[\sum_{i=1}^k\frac{\big(f(\lambda_i)\big)^2}{g(\lambda_i)(\lambda_{k+1}-\lambda_i)}\bigg(4\overline{\lambda_i}^{\frac{1}{2}}+n^{2}\delta\bigg)\bigg]^{\frac{1}{2}},
\end{align}  
where $\overline{\lambda_{i}}= \lambda_{i}- {\rm inf}_{\Omega}\,q$.\\
Finally, note that the case of the clamped problem with weight :
\begin{equation}
\begin{cases}
\Delta^2 u=\lambda\, \rho\, u & \hbox{in} \,\,\Omega\\
\displaystyle u=\frac{\partial u}{\partial \nu}=0 & \hbox{on}\,\, \partial \Omega,
\end{cases}
\end{equation}
can be easily treated with minor changes.
%%%%%%%%%%%%%%%%%%%%%%%%%%%%%%%%%%%%%%%%%%%%%%%%%%%%%%%%%%%%%%%%%%%%%%%
\section{Euclidean Submanifolds}
Before stating the main result of this section, we introduce a family of couples of functions and a theorem obtained earlier in \cite{Ilias-Mak3}, which will play an essential role in the proofs of all our results.
\begin{definition}\label{def1}
Let $\lambda >0$. A couple $(f,g)$ of functions defined on $]0,\lambda[$ belongs to $\Im_{\lambda}$ provided that  
\begin{itemize}
 \item[1.] $f$ and $g$ are positive,
\item[2.] $f$ and $g$ satisfy the following condition,\\ 
 for any $x,\, y \in ]0,\lambda[$ such that $x \neq y$,
\begin{equation}\label{cond}
\Big(\frac{f(x)-f(y)}{x-y}\Big)^2+\Big(\frac{\big(f(x)\big)^2}{g(x)(\lambda-x)}+\frac{\big(f(y)\big)^2}{g(y)(\lambda-y)}\Big)\Big(\frac{g(x)-g(y)}{x-y}\Big)\le 0.
\end{equation}
\end{itemize}
\end{definition}
A direct consequence of our definition is that $g$ must be nonincreasing.\\
If we multiply $f$ and $g$ of $\Im_{\lambda}$ by positive constants the resulting functions are also in $\Im_{\lambda}$. In the case where $f$ and $g$ are differentiable, one can easily deduce from (\ref{cond}) the following necessary condition:
\begin{equation*}
\bigg[\big(\ln{f(x)}\big)'\bigg]^2 \le \frac{-2}{\lambda-x}\big(\ln{g(x)}\big)'.
\end{equation*}
This last condition helps us to find many couples $(f,g)$ satisfying the conditions 1) and 2) above. Among them, we mention
$\left\{\Big(1, (\lambda-x)^{\alpha}\Big)\, / \, \alpha \ge 0\right\},$ $\left\{\Big((\lambda-x),(\lambda-x)^{\beta}\Big)\, / \, \beta\ge \frac{1}{2}\right\},$ $\left\{\Big((\lambda-x)^{\delta},(\lambda-x)^{\delta}\Big)\, / \, 0<\delta \le 2\right\}$.\\ 
\indent Let $\mathcal{H}$ be a complex Hilbert space with scalar product $\langle .,. \rangle$ and
corresponding norm $\|.\|$. For any two operators $A$ and $B$, we denote by $[A,B]$ their commutator, defined by $[A,B]=AB-BA$. 
\begin{theorem}\label{abstract form 1}
Let $A$ : $\mathcal{D}\subset \mathcal{H}\longrightarrow
\mathcal{H}$ be a self-adjoint operator defined on a dense domain
$\mathcal{D}$, which is semibounded below and has a discrete
spectrum $\lambda_1 \leq \lambda_2\leq \lambda_3...$. 
Let $\{T_p:\mathcal{D}\longrightarrow \mathcal{H}\}_{p=1}^n$ be a collection of
skew-symmetric operators, and $\{B_p:T_p(\mathcal{D})\longrightarrow
\mathcal{H}\}_{p=1}^n$ be a collection of symmetric operators,
leaving $\mathcal{D}$ invariant. We denote by
$\left\{u_i\right\}_{i=1}^{\infty}$ a basis of orthonormal
eigenvectors of $A$, $u_{i}$ corresponding to $\lambda_i$. Let $k\ge 1$ and assume that $\lambda_{k+1}>\lambda_k$. 
Then, for any $(f,g)$ in $\Im_{\lambda_{k+1}}$  
\begin{align}\label{abst}
\bigg( \sum_{i=1}^k
\sum_{p=1}^n &f(\lambda_i)\langle[T_p,
B_p]u_i,u_i\rangle \bigg)^2 \\
\le 4 \bigg(\sum_{i=1}^k\sum_{p=1}^n g(\lambda_i)& \langle
[A,B_p]u_i, B_p u_i\rangle \bigg) \bigg(\sum_{i=1}^k
\sum_{p=1}^n \frac{\big(f(\lambda_i)\big)^2}{g(\lambda_i)(\lambda_{k+1}-\lambda_i)}\|T_p u_i\|^2\bigg).\nonumber
\end{align}
\end{theorem}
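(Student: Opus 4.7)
My plan is to combine the Rayleigh--Ritz variational principle for $\lambda_{k+1}$ with a weighted AM--GM estimate, then use symmetrization in $(i,j)$ to reveal condition~(\ref{cond}) as a discriminant condition on a $2\times 2$ quadratic form. Fix $i\le k$ and $p$, and set $b_{jip}=\langle B_pu_i,u_j\rangle$, $t_{jip}=\langle T_pu_i,u_j\rangle$. Introduce
\[
\phi_{ip}=B_pu_i-\sum_{j=1}^{k}b_{jip}u_j,\qquad \psi_{ip}=T_pu_i-\sum_{j=1}^{k}t_{jip}u_j,
\]
both orthogonal to $\mathrm{span}\{u_1,\dots,u_k\}$. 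The symmetry of $B_p$ and skew-symmetry of $T_p$ give the relations $b_{ijp}=\overline{b_{jip}}$, $t_{ijp}=-\overline{t_{jip}}$, and
\[
\langle[T_p,B_p]u_i,u_i\rangle=-2\operatorname{Re}\langle\psi_{ip},\phi_{ip}\rangle-2\operatorname{Re}\sum_{j=1}^{k}\overline{b_{jip}}\,t_{jip};
\]
applying Rayleigh--Ritz to $\phi_{ip}$ produces the standard Yang-type bound
\[
(\lambda_{k+1}-\lambda_i)\|\phi_{ip}\|^{2}\le\langle[A,B_p]u_i,B_pu_i\rangle-\sum_{j=1}^{k}(\lambda_j-\lambda_i)|b_{jip}|^{2}.
\]

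The next step is to apply the weighted AM--GM $-2\operatorname{Re}\langle\psi_{ip},\phi_{ip}\rangle\le\alpha_i\|\phi_{ip}\|^{2}+\alpha_i^{-1}\|\psi_{ip}\|^{2}$ with the calibrated choice $\alpha_i=tg(\lambda_i)(\lambda_{k+1}-\lambda_i)/f(\lambda_i)$, $t>0$, to multiply by $f(\lambda_i)$, to substitute both the Rayleigh bound on $\|\phi_{ip}\|^{2}$ and the Parseval identity $\|\psi_{ip}\|^{2}=\|T_pu_i\|^{2}-\sum_{j\le k}|t_{jip}|^{2}$, and to sum over $i,p$. By construction, the principal terms collect into $tX+t^{-1}Y$ with
\[
X=\sum_{i,p}g(\lambda_i)\langle[A,B_p]u_i,B_pu_i\rangle,\qquad Y=\sum_{i,p}\frac{f(\lambda_i)^{2}}{g(\lambda_i)(\lambda_{k+1}-\lambda_i)}\|T_pu_i\|^{2},
\]
while the leftover ``off-diagonal'' terms form a quantity $R=R(t)$ depending only on the numbers $b_{jip},t_{jip}$ with $i,j\le k$.

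The crux is to verify $R\le 0$. Symmetrizing each summand in $i\leftrightarrow j$ (using $|b_{ijp}|=|b_{jip}|$, $|t_{ijp}|=|t_{jip}|$, and $\operatorname{Re}(\overline{b_{ijp}}t_{ijp})=-\operatorname{Re}(\overline{b_{jip}}t_{jip})$), the contribution of a pair $(i,j)$ with $\lambda_i\ne\lambda_j$ becomes a real quadratic form in $(b_{jip},t_{jip})$ whose associated symmetric matrix has diagonal $\bigl(\tfrac{t}{2}(\lambda_j-\lambda_i)(g(\lambda_j)-g(\lambda_i)),\,-\tfrac{Q_{ij}}{2t}\bigr)$ and off-diagonal $\tfrac{1}{2}(f(\lambda_j)-f(\lambda_i))$, where $Q_{ij}=\frac{f(\lambda_i)^{2}}{g(\lambda_i)(\lambda_{k+1}-\lambda_i)}+\frac{f(\lambda_j)^{2}}{g(\lambda_j)(\lambda_{k+1}-\lambda_j)}>0$. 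Its trace is nonpositive because $g$ is nonincreasing (already noted as a consequence of~(\ref{cond})), and its determinant is nonnegative exactly when
\[
\bigl(f(\lambda_j)-f(\lambda_i)\bigr)^{2}+(\lambda_j-\lambda_i)\bigl(g(\lambda_j)-g(\lambda_i)\bigr)Q_{ij}\le 0,
\]
which upon division by $(\lambda_i-\lambda_j)^{2}$ is precisely~(\ref{cond}) with $\lambda=\lambda_{k+1}$. Hence the matrix is negative semidefinite, and together with the trivially nonpositive diagonal contributions $i=j$ (which reduce to $-\tfrac{Q_{ii}}{2t}|t_{iip}|^{2}$), this gives $R\le 0$.

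Putting the estimates together yields $\sum_{i,p}f(\lambda_i)\langle[T_p,B_p]u_i,u_i\rangle\le tX+t^{-1}Y$ for every $t>0$. Replacing $T_p$ by $-T_p$ reverses the sign of the left-hand side while leaving $X$ and $Y$ unchanged, so the absolute value of the left-hand side is bounded by $\inf_{t>0}(tX+t^{-1}Y)=2\sqrt{XY}$, and squaring produces~(\ref{abst}). The principal difficulty is the algebraic observation that the axioms defining $\Im_{\lambda_{k+1}}$ are engineered to be exactly the $2\times 2$ discriminant condition forcing the symmetrized off-diagonal remainder to be negative semidefinite; once this is recognised, the rest is a routine combination of Rayleigh--Ritz, weighted AM--GM, and optimization in $t$.
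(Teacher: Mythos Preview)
The paper does not actually prove Theorem~\ref{abstract form 1}; it is quoted from the authors' earlier work \cite{Ilias-Mak3} and merely stated here as a tool. So there is no in-paper proof to compare with.

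That said, your argument is correct and is precisely the standard route used to establish such abstract commutator inequalities (and almost certainly the one in \cite{Ilias-Mak3}). The three ingredients---projecting $B_pu_i$ and $T_pu_i$ off $\mathrm{span}\{u_1,\dots,u_k\}$, the Rayleigh--Ritz lower bound $(\lambda_{k+1}-\lambda_i)\|\phi_{ip}\|^2\le\langle[A,B_p]u_i,B_pu_i\rangle-\sum_{j\le k}(\lambda_j-\lambda_i)|b_{jip}|^2$, and the weighted AM--GM with the calibrated $\alpha_i$---are exactly the expected ones. Your identification of condition~(\ref{cond}) as the discriminant condition making the symmetrized $(i,j)$ remainder a negative semidefinite form in $(b_{jip},t_{jip})$ is the heart of the matter and is carried out correctly: with $a=t(\lambda_j-\lambda_i)(g(\lambda_j)-g(\lambda_i))$, $c=-Q_{ij}/t$, $d=f(\lambda_j)-f(\lambda_i)$, one has $a\le 0$ from the monotonicity of $g$, $c<0$, and $ac-d^2\ge 0$ is (\ref{cond}) after dividing by $(\lambda_i-\lambda_j)^2$. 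The diagonal case $i=j$ is handled correctly because $t_{iip}$ is purely imaginary and $b_{iip}$ is real, so the cross term $\mathrm{Re}(\overline{b_{iip}}t_{iip})$ vanishes. Finally, the trick of replacing $T_p$ by $-T_p$ to upgrade to an absolute value, followed by optimization in $t$, is clean; note that the derived inequality $\mathrm{LHS}\le tX+t^{-1}Y$ for all $t>0$ itself forces $X\ge 0$ (otherwise send $t\to\infty$), so the squaring step is justified without a separate verification.
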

Our first result is the following application of this inequality to the eiganvalues of the clamped plate problem (\ref {clamped prob}) on a domain of a Euclidean submanifold :
\begin{theorem}\label{clamp1}
Let $X:M\longrightarrow \R^m$ be an isometric immersion of an $n$-dimensional Riemannian manifold $M$ in $\R^{m}$. Let $\Omega$ be a bounded domain of $M$ and consider the clamped plate problem (\ref{clamped prob}) on it. For any $k\ge 1$ such that  $\lambda_{k+1}>\lambda_k$ and for any $(f,g)$ in $\Im_{\lambda_{k+1}}$, we have
\begin{align}\label{14'}
\sum_{i=1}^k f(\lambda_{i}) \leq \frac{2}{n}\bigg[\sum_{i=1}^k&g(\lambda_{i})\bigg(2(n+2)\lambda_i^{\frac{1}{2}}+n^2\delta\bigg)\bigg]^{\frac{1}{2}}\times \nonumber\\
& \bigg[\sum_{i=1}^k\frac{\big(f(\lambda_i)\big)^2}{g(\lambda_i)(\lambda_{k+1}-\lambda_i)}\bigg(\lambda_i^{\frac{1}{2}}+\frac{n^2}{4}\delta\bigg)\bigg]^{\frac{1}{2}},
\end{align}
where $\delta=\displaystyle sup_{\Omega} \arrowvert H\arrowvert^2$ and $H$ is the mean curvature vector field of the immersion $X$ (i.e which is given by $\frac{1}{n} \,{\rm trace}\,h$, where $h$ is the second fundamental form of $X$).
\end{theorem}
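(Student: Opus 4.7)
The plan is to apply Theorem \ref{abstract form 1} to $A = \Delta^2$ on $\mathcal H = L^2(\Omega)$ with $\mathcal D$ the natural domain of the clamped-plate problem, using $m$ auxiliary operators built from the components of the immersion $X = (x_1, \ldots, x_m)$. Specifically, for each $p$ I would take $B_p$ to be multiplication by the coordinate function $x_p$ and
\[
T_p\varphi \;=\; 2\langle \nabla x_p, \nabla \varphi\rangle + (\Delta x_p)\varphi,
\]
which is just the commutator $[\Delta, x_p]$. A one-line integration by parts verifies that $T_p$ is skew-symmetric on $\mathcal D$, since the only possible boundary term involves $u$ and $\partial_\nu u$ on $\partial\Omega$, both of which vanish for $u \in \mathcal D$. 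Symmetry of $B_p$ and the invariance requirements are evident.

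The rest of the argument amounts to computing the three ingredients appearing in \eqref{abst}. First, $[T_p, B_p]\varphi = 2|\nabla x_p|^2 \varphi$, so summing over $p$ and using that $X$ is isometric ($\sum_p |\nabla x_p|^2 = n$) gives $\sum_p \langle [T_p, B_p] u_i, u_i\rangle = 2n$. Second, expanding $|T_p u_i|^2$, the cross term $2\sum_p (\Delta x_p)\langle \nabla x_p, \nabla u_i\rangle$ vanishes pointwise, because the tangent vector $\sum_p (\Delta x_p)\nabla x_p$ represents the tangential component of $\Delta X = nH$, which is zero as $H \perp TM$. With $\sum_p (\Delta x_p)^2 = n^2|H|^2 \leq n^2 \delta$ and the standard estimate $\int|\nabla u_i|^2 \leq \|\Delta u_i\|_2 = \sqrt{\lambda_i}$, one reaches
\[
\sum_{p=1}^m \|T_p u_i\|^2 \;\leq\; 4\sqrt{\lambda_i} + n^2\delta \;=\; 4\Bigl(\lambda_i^{1/2} + \tfrac{n^2}{4}\delta\Bigr).
\]

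The delicate ingredient is the mixed term $\sum_p \langle [\Delta^2, x_p] u_i, x_p u_i\rangle$: each summand a priori involves $x_p^2$, which is uncontrollable on a noncompact ambient. The way out is the factorization $[\Delta^2, x_p] = \Delta T_p + T_p \Delta$; a careful application of Green's formula (legitimate since $x_p u_i = \partial_\nu(x_p u_i) = 0$ on $\partial\Omega$) together with the skew-symmetry of $T_p$ makes the $x_p$-dependence cancel, yielding
\[
\langle [\Delta^2, x_p] u_i, x_p u_i\rangle \;=\; \|T_p u_i\|^2 \;-\; 2\int_\Omega |\nabla x_p|^2 u_i \,\Delta u_i.
\]
Summing in $p$, using once more $\sum_p |\nabla x_p|^2 = n$ and $-\int u_i \Delta u_i = \int|\nabla u_i|^2 \leq \sqrt{\lambda_i}$, this is bounded by $2(n+2)\sqrt{\lambda_i} + n^2 \delta$. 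Plugging the three bounds into \eqref{abst} and taking a square root, the factor $2n$ coming from the left-hand side together with the factor $4$ extracted in the norm estimate combine to produce exactly the prefactor $\tfrac{2}{n}$ of \eqref{14'}. The main obstacle is precisely the cancellation in the mixed-term computation: it is what forces the choice $T_p = [\Delta, x_p]$ and what lets a universal inequality survive the noncompactness of the ambient Euclidean space.
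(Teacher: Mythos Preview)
Your proposal is correct and follows essentially the same route as the paper: apply Theorem~\ref{abstract form 1} with $A=\Delta^2$, $B_p=x_p$, $T_p=[\Delta,x_p]$, then use the isometric-immersion identities $\sum_p|\nabla x_p|^2=n$, $\sum_p(\Delta x_p)\nabla x_p=0$, $\sum_p(\Delta x_p)^2=n^2|H|^2$ together with $\int|\nabla u_i|^2\le\lambda_i^{1/2}$ to bound the three ingredients. The only cosmetic difference is that the paper derives the mixed term by brute-force expansion of $[\Delta^2,X_p]u_i$ and repeated integration by parts to reach the identity in \eqref{l3}, whereas you obtain the equivalent formula $\langle[\Delta^2,x_p]u_i,x_pu_i\rangle=\|T_pu_i\|^2-2\int|\nabla x_p|^2 u_i\Delta u_i$ more directly via the factorization $[\Delta^2,x_p]=\Delta T_p+T_p\Delta$ and the skew-symmetry of $T_p$; the end result and the constants are identical.
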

\begin{proof}To prove this theorem, we apply inequality (\ref{abst}) of Theorem \ref{abstract form 1} with $A=\Delta^2$, $B_p=X_p$ and $T_p=[\Delta,X_p]$, $p=1,\ldots,m$, where $X_1,\ldots,X_m$ are the components of the immersion $X$.
This gives
\begin{align}\label{15'}
\bigg( \sum_{i=1}^k
\sum_{p=1}^m &f(\lambda_i)\langle[[\Delta,X_p],
X_p]u_i,u_i\rangle_{L^2} \bigg)^2 \\
\le 4 \bigg(\sum_{i=1}^k\sum_{p=1}^m g(\lambda_i)& \langle
[\Delta^2,X_p]u_i, X_p u_i\rangle_{L^2} \bigg) \bigg(\sum_{i=1}^k
\sum_{p=1}^m \frac{\big(f(\lambda_i)\big)^2}{g(\lambda_i)(\lambda_{k+1}-\lambda_i)}\|[\Delta,X_p] u_i\|_{L^2}^2\bigg).\nonumber
\end{align}
where $u_i$ are the $L^2$-normalized eigenfunctions.
First we have,
for any $p=1,\ldots,m$,
\begin{align*}
[\Delta^2, X_p]u_i=& \Delta^2 X_p u_i+2 \nabla \Delta X_p.\nabla u_i+2\Delta (\nabla X_p.\nabla u_i)\nonumber\\
& +2 \Delta X_p \Delta u_i+ 2 \nabla X_p.\nabla \Delta u_i.
\end{align*}
Thus
\begin{align}
\langle [\Delta^2, X_p]u_i, X_p u_i \rangle_{L^2} &= \int_{\Omega}  u_i^2 X_p \Delta^2 X_p + 2\int_{\Omega}X_p u_i \nabla \Delta X_p.\nabla u_i\nonumber\\
&+2\int_{\Omega} X_p u_i \Delta (\nabla X_p.\nabla u_i)+2\int_{\Omega}X_p u_i \Delta X_p \Delta u_i\nonumber\\
&+2\int_{\Omega} X_p u_i \nabla X_p.\nabla \Delta u_i \label{k3'}\\
=&\int_{\Omega} \Delta X_p \Delta \big(X_p u_i^2\big) -2\int_{\Omega}div \big(X_p u_i \nabla u_i\big) \Delta X_p \nonumber\\
&+2\int_{\Omega} \Delta \big(X_p u_i\big) \nabla X_p.\nabla u_i +2\int_{\Omega}X_p \Delta X_p u_i \Delta u_i\nonumber\\
&-2\int_{\Omega}div\big(X_p u_i \nabla X_p \big)\Delta u_i.\nonumber\\
\end{align}
A straightforward calculation gives
\begin{align}\label{l3}
\langle [\Delta^2, X_p]u_i, X_p u_i \rangle_{L^2} =& \,4\int_{\Omega} u_i \Delta X_p \nabla X_p.\nabla u_i + \int_{\Omega} \big(\Delta X_p\big)^2 u_i^2 \nonumber\\
&+ 4\int_{\Omega} \big(\nabla X_p.\nabla u_i\big)^2 -2 \int_{\Omega} \arrowvert \nabla X_p\arrowvert^2 u_i \Delta u_i.
\end{align}
Since $X$ is an isometric immersion, we have
\begin{align}\label{l3'}
n H=(\Delta X_1, \ldots,& \Delta X_m),\,\, \sum_{p=1}^m u_i \Delta X_p \nabla X_p.\nabla u_i =0 \nonumber\\
&\hbox{and}\,\,\sum_{p=1}^m \big(\nabla X_p.\nabla u_i\big)^2=\arrowvert \nabla u_i\arrowvert^2.
\end{align}
Incorporating these identities in (\ref{l3}) and summing on $p$, from 1 to $m$, we obtain
\begin{align}
\sum_{p=1}^m \langle [\Delta^2, X_p]u_i, X_p u_i \rangle_{L^2} = & \,4\int_{\Omega}\arrowvert \nabla u_i\arrowvert^2 -2n\int_{\Omega}u_i \Delta u_i + n^2\int_{\Omega} \arrowvert H\arrowvert^2 u_i^2 \nonumber\\
=&\, 2(n+2) \int_{\Omega} u_i (-\Delta u_i)+ n^2\int_{\Omega} \arrowvert H\arrowvert^2 u_i^2\nonumber\\
\leq  2(n+2)& \bigg[\int_{\Omega} (-\Delta u_i)^2\bigg]^{\frac{1}{2}}\bigg[\int_{\Omega}u_i^2\bigg]^{\frac{1}{2}}+n^2\int_{\Omega} \arrowvert H\arrowvert^2 u_i^2\label{m3}\\
=& 2(n+2)\lambda_i^{\frac{1}{2}}+n^2\int_{\Omega} \arrowvert H\arrowvert^2 u_i^2\nonumber\\
\leq &2(n+2)\lambda_i^{\frac{1}{2}}+n^2\delta \label{n3},
\end{align}
where we used the Cauchy-Schwarz inequality to obtain (\ref{m3}) and where $\delta=sup_{\Omega} |H|^2$.\\
On the other hand, we have
\begin{equation*}
[\Delta, X_p]u_i = 2\nabla X_p.\nabla u_i+ u_i \Delta X_p,
\end{equation*}
then \begin{align*}
\sum_{p=1}^m \|[\Delta,X_p]u_i\|_{L^2}^2 =& \sum_{p=1}^m\int_{\Omega}\Big(2\nabla X_p.\nabla u_i+ u_i \Delta X_p\Big)^2 \nonumber\\
 =& 4\sum_{p=1}^m \int_{\Omega}\Big(\nabla X_p.\nabla u_i\Big)^2 +4\sum_{p=1}^m\int_{\Omega}u_i \Delta X_p \nabla X_p.\nabla u_i \nonumber\\
&+\sum_{p=1}^m\int_{\Omega}(\Delta X_p)^2 u_i^2.
     \end{align*}
Using identities (\ref{l3'}), we obtain
\begin{align}\label{o3}
 \sum_{p=1}^m \|[\Delta,X_p]u_i\|_{L^2}^2 = &4\int_{\Omega}\arrowvert\nabla u_i\arrowvert^2+n^2\int_{\Omega}\arrowvert H\arrowvert^2u_i^2 \nonumber\\
= & 4\int_{\Omega}(-\Delta u_i).u_i +n^2\int_{\Omega}\arrowvert H\arrowvert^2u_i^2\nonumber\\
\leq & 4\bigg[\int_{\Omega} (-\Delta u_i)^2\bigg]^{\frac{1}{2}}\bigg[\int_{\Omega}u_i^2\bigg]^{\frac{1}{2}} +n^2\delta\nonumber\\
=& 4\lambda_i^{\frac{1}{2}}+n^2\delta.
\end{align}
A direct calculation gives
\begin{equation*}
\langle [[\Delta,X_p],X_p]u_i,u_i\rangle_{L^2}= \int_{\Omega} \Big(\Delta(X_p^2u_i) -2X_p \Delta (X_p u_i)+X_p^2\Delta u_i\Big)u_i=2\int_{\Omega}\arrowvert \nabla X_p\arrowvert^2 u_i^2.
\end{equation*}
Therefore
\begin{equation}\label{p3}
\sum_{p=1}^m\langle [[\Delta,X_p],X_p]u_i,u_i\rangle_{L^2}=2\sum_{p=1}^m\int_{\Omega}\arrowvert \nabla X_p\arrowvert^2 u_i^2 =2 n.
\end{equation}
To conclude, we simply use the estimates (\ref{n3}), (\ref{o3}) and (\ref{p3}) together with inequality (\ref{15'}). 
\end{proof}
\begin{remark} \label {rem1}
\begin{itemize}
\item As indicated in the end of the introduction, Theorem \ref{clamp1} holds for a general operator $\Delta^2+q$, where $q$ is a smooth potential. Indeed, this is an immediate consequence of the fact that $[\Delta^2+q,X_p]=[\Delta^2,X_p]$ and all the proof of Theorem \ref{clamp1} works in this situation. The only modification is in the estimation of the term $\int_{\Omega}\arrowvert \nabla u_i\arrowvert^2$. In fact, in this case, we have  
\begin{align*}
\int_{\Omega}\arrowvert \nabla u_i\arrowvert^2\le \bigg[\int_{\Omega} (-\Delta u_i)^2\bigg]^{\frac{1}{2}}&\bigg[\int_{\Omega}u_i^2\bigg]^{\frac{1}{2}}\\
 {} & =\bigg[\lambda_i-\int_{\Omega}qu_i^2\bigg]^{\frac{1}{2}}\leq \Big(\overline{\lambda_i}\Big)^{\frac{1}{2}},
 \end{align*}
where $\overline{\lambda_i}=\lambda_i-\inf_{\Omega} q.$\\
Taking into account this modification in inequalities (\ref{m3}) and (\ref{o3}), we obtain inequality (\ref{16'''}).
\item If $f(x)=g(x)=(\lambda_{k+1}-x)^2$, then inequality (\ref{14'}) extends inequality (\ref{WX2}) of Wang and Xia to any Riemannian submanifolds of $\R^m$. We also observe that, by using a Chebyshev inequality (for instance the one of Lemma 1 in \cite{Cheng-Ichik-Mamet1}), inequality (\ref{CIM}) of Cheng, Ishikawa and Mametsuka  can be easily deduced from inequality (\ref{14'}). 
\item If $f(x)=g(x)^2=(\lambda_{k+1}-x)$, then inequality (\ref{14'}) generalizes inequality (\ref{1}) of Cheng and Yang to the case of Euclidean submanifolds.
\end{itemize}
\end{remark}
Using the standard emdeddings of the rank one compact symmetric spaces in a Euclidean space (see for instance Lemma 3.1 in \cite{Soufi.Harl.Ilias} for the values of $|H|^2$ of these embeddings), we can extend easily the previous theorem to domains or submanifolds of these symmetric spaces and obtain 
\begin{theorem}\label{clamp2}
Let $\bar{M}$ be the sphere $\mathbb{S}^m$, the real projective space $\R P^m$, the complex projective space $\C P^m$ or the quaternionic projective space $\mathbb{Q} P^m$ endowed with their respective metrics. Let $(M,g)$ be a compact Riemannian manifold of dimension $n$ and let $X : M\longrightarrow \bar{M}$ be an isometric immersion of mean curvature $H$. Consider the clamped plate problem on a bounded domain $\Omega$ of $M$. For any $k\ge 1$ such that $\lambda_{k+1}>\lambda_k$ and for any $(f,g) \in \Im_{\lambda_{k+1}}$, we have
\begin{align}\label{r3}
\sum_{i=1}^kf(\lambda_i) \leq \frac{2}{n}\bigg\{\sum_{i=1}^k g(\lambda_i)\Big[&2(n+2)\lambda_i^{\frac{1}{2}}+n^2\delta'\Big]\bigg\}^{\frac{1}{2}}\times\nonumber\\
&\bigg\{\sum_{i=1}^k\frac{\big(f(\lambda_i)\big)^2}{g(\lambda_i)(\lambda_{k+1}-\lambda_i)}
\Big[\lambda_i^{\frac{1}{2}}+\frac{n^2}{4}\delta'\Big]\bigg\}^{\frac{1}{2}},
\end{align}
where \begin{equation*}
    \delta'=sup(\arrowvert H\arrowvert^2+1)\,\,\,     \text{if $\overline{M}=\mathbb{S}^{m}$}, 
    \end{equation*}
\begin{equation*}\delta'=sup(\arrowvert H\arrowvert^2+d(n)),\,\, \hbox{where}\,\, d(n)=
 \begin{cases}
 \frac{2(n+1)}{n}   &\text{if $\overline{M}= \mathbb{R}P^{m}$},\\
  \frac{2(n+2)}{n}   &\text{if $\overline{M}= \mathbb{C}P^{m}$},\\
    \frac{2(n+4)}{n}    &\text{if $\overline{M}= \mathbb{Q}P^{m}$}.\\
    \end{cases}\end{equation*}  
    \end{theorem}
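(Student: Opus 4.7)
The plan is to reduce Theorem \ref{clamp2} to the Euclidean case already treated in Theorem \ref{clamp1}. For each ambient space $\bar{M}$ in the statement, let $\iota : \bar{M} \to \mathbb{R}^N$ denote the standard isometric embedding into a suitable Euclidean space: the canonical inclusion $\mathbb{S}^m \hookrightarrow \mathbb{R}^{m+1}$, and the classical ``first standard embedding'' into an appropriate space of real symmetric, Hermitian or quaternionic Hermitian matrices in the three projective cases. The composition $\tilde X = \iota \circ X : M \to \mathbb{R}^N$ is then an isometric immersion of $M$ into Euclidean space, and because the clamped plate problem on $\Omega \subset M$ depends only on the intrinsic Riemannian structure of $M$, its spectrum is unchanged. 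The idea is to apply Theorem \ref{clamp1} to $\tilde X$ and to identify $\sup_\Omega |H_{\tilde X}|^2$ with the constant $\delta'$ appearing in the statement.

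The next step is to relate $H_{\tilde X}$ to $H_X$. Decomposing the second fundamental form of the composition one has, for any orthonormal frame $\{e_j\}$ of $T_x M$,
\[
n H_{\tilde X} = n H_X + \sum_{j=1}^n h_\iota\bigl(dX(e_j), dX(e_j)\bigr).
\]
Here $H_X$ is tangent to $\bar M$, since it lies in the normal bundle of $M$ in $\bar M$, whereas the correction term, by the homogeneity of the standard embeddings of rank-one symmetric spaces, lies in the normal bundle of $\bar M$ in $\mathbb{R}^N$ and has squared Euclidean norm $n^2$ for $\mathbb{S}^m$ and $n^2\, d(n)$ for $\mathbb{R}P^m$, $\mathbb{C}P^m$, $\mathbb{Q}P^m$ with the announced values of $d(n)$. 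These are exactly the quantities tabulated in Lemma 3.1 of \cite{Soufi.Harl.Ilias}.

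Granting these facts, $H_X$ and the correction term are orthogonal in $\mathbb{R}^N$, so Pythagoras yields $|H_{\tilde X}|^2 = |H_X|^2 + 1$ in the spherical case and $|H_{\tilde X}|^2 = |H_X|^2 + d(n)$ in the projective cases. Taking suprema over $\Omega$ produces precisely the constant $\delta'$ of the statement, and substituting $\delta'$ for $\delta$ in the conclusion of Theorem \ref{clamp1} applied to $\tilde X$ gives (\ref{r3}).

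The only step that is not routine is the verification of the vector $\sum_j h_\iota(dX(e_j), dX(e_j))$ in the projective cases. For $\mathbb{S}^m$ it is immediate from total umbilicity, since $h_\iota(Y, Z) = -\langle Y, Z \rangle\, \iota(x)$ and the trace is $-n\, \iota(x)$, of norm $n$ and normal to the sphere. For the projective spaces the embeddings are not totally umbilical, but one may exploit their description via rank-one projectors together with the transitive action of the isometry group on $n$-planes of a given ``type'' in a tangent space of $\bar M$; this forces the trace of $h_\iota$ on any such $n$-plane to be a fixed vector normal to $\bar M$ whose squared norm equals the prescribed $n^2\, d(n)$. This is the main obstacle of the proof, but it is precisely the content of the cited lemma and may be invoked directly.
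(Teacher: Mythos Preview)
Your approach is exactly the one the paper intends: compose $X$ with the standard Euclidean embedding $\iota$ of the rank-one symmetric space $\bar M$, apply Theorem \ref{clamp1} to $\tilde X=\iota\circ X$, and read off $\sup_\Omega|H_{\tilde X}|^2$ from the cited Lemma 3.1 of \cite{Soufi.Harl.Ilias}. One small imprecision: in the projective cases the quantity $\big|\sum_j h_\iota(dX(e_j),dX(e_j))\big|^2$ is not in general equal to $n^2 d(n)$ but only bounded above by it (it depends on the position of $T_xM$ relative to the complex or quaternionic structure, which is why the Remark following Theorem \ref{clamp2} offers sharper constants for odd-dimensional or totally real submanifolds); since the right-hand side of (\ref{14'}) is increasing in $\delta$, this inequality is all that is needed, and your argument goes through unchanged with ``$\le n^2 d(n)$'' in place of ``$= n^2 d(n)$''.
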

\begin{remark}
\begin{itemize}
\item We observe (as in Remark 3.2 of \cite{Soufi.Harl.Ilias}) that in some special geometrical situations, the constant $d(n)$ in the inequality of Theorem~\ref{clamp2} can be replaced by a sharper one. For instance, when $\bar M=\C P^m$ and
\begin{itemize}
       \item [-] $M$ is odd--dimensional, then one can replace  $d(n)$ by $d'(n)= \frac{2}{n}(n+2-\frac 1 n)$,
       \item [-] $X(M)$ is totally real, then  $d(n)$ can be replaced by $d'(n)= \frac{2(n+1)}{n}$.
       \end{itemize}
\item When $f(x)=g(x)=(\lambda_{k+1}-x)^2$, and $\bar{M}$ is a sphere, (\ref{r3}) generalizes to submanifolds inequality (\ref{WX}) established by Wang and Xia for spherical domains.
\item As for Theorem \ref{clamp1}, the result of Theorem \ref{clamp2} holds for a more general operator $\Delta^2+q$, with the same modification (i.e $\bar{\lambda_{i}}^{\frac{1}{2}}$ instead of $\lambda_{i}^{\frac{1}{2}}$). 
\end{itemize}
\end{remark}
%%%%%%%%%%%%%%%%%%%%%%%%%%%%%%%%%%%%%%%%%%%%%%%%%%%%%%%%%%%%%%%%%%%%%%%%%%
\section{Manifolds admitting spherical eigenmaps}
In this section, as before, we let $(M,g)$ be a Riemannian manifold and $\Omega$ be a bounded domain of $M$. A map $X:(M,g) \rightarrow \mathbb{S}^{m-1}$ is called an eigenmap if its components $X_1,\,X_2,\ldots,X_{m}$ are all eigenfunctions associated to the same eigenvalue $\lambda$ of the Laplacian of $(M,g)$. This is equivalent to say that the map $X$ is a harmonic map from $(M,g)$ into $\mathbb{S}^{m-1}$ with constant energy $\lambda$  $\Big({\rm i.e}\;\sum_{p=1}^{m}\left|\nabla X_p \right|^2=\lambda$\Big). The most important examples of such manifolds $M$ are the compact homogeneous Riemannian manifolds. In fact, they admit eigenmaps for all the positive eigenvalues of their Laplacian (see \cite{Li}).

\begin{theorem}\label{eigenmap}
Let $\lambda$ be an eigenvalue of the Laplacian of $(M,g)$ and suppose that $(M,g)$ admits an eigenmap $X$ associated to this eigenvalue $\lambda$. Let $\Omega$ be a bounded domain of $M$ and consider the clamped plate problem (\ref{clamped prob}) on it. For any $k\ge 1$ such that $\lambda_{k+1}>\lambda_k$ and for any $(f,g) \in \Im_{\lambda_{k+1}}$, we have 
\begin{align}\label{h}
\sum_{i=1}^kf(\lambda_i)\nonumber\\ \leq \bigg[\sum_{i=1}^kg(\lambda_i)\Big(\lambda+6\lambda_i^{\frac{1}{2}}\Big)\bigg]^{\frac{1}{2}}&\bigg[\sum_{i=1}^k\frac{\big(f(\lambda_i)\big)^2}{g(\lambda_i)(\lambda_{k+1}-\lambda_i)}\Big(\lambda+4\lambda_i^{\frac{1}{2}}\Big)\bigg]^{\frac{1}{2}}.
\end{align}
\end{theorem}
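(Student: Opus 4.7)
The plan is to follow the same blueprint as the proof of Theorem~\ref{clamp1}, with the eigenmap $X:M\to\mathbb{S}^{m-1}\subset\R^m$ playing the role of the isometric immersion. I will apply Theorem~\ref{abstract form 1} to $A=\Delta^2$, $B_p=X_p$ and $T_p=[\Delta,X_p]$ for $p=1,\dots,m$, and then evaluate (or bound from above) the three quantities that appear in (\ref{abst}):
\begin{equation*}
\mathcal{C}_i:=\sum_{p=1}^m\langle[[\Delta,X_p],X_p]u_i,u_i\rangle_{L^2},\ \ \mathcal{X}_i:=\sum_{p=1}^m\langle[\Delta^2,X_p]u_i,X_pu_i\rangle_{L^2},\ \ \mathcal{Y}_i:=\sum_{p=1}^m\|[\Delta,X_p]u_i\|_{L^2}^2.
\end{equation*}
The eigenmap properties I will exploit are $\Delta X_p=\lambda X_p$, $\sum_p X_p^2=1$ (hence $\sum_p X_p\nabla X_p=0$) and the constant-energy identity $\sum_p|\nabla X_p|^2=\lambda$.

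For $\mathcal{C}_i$, the same direct calculation as in the proof of Theorem~\ref{clamp1} gives $\mathcal{C}_i=2\sum_p\int_\Omega|\nabla X_p|^2u_i^2=2\lambda$. For $\mathcal{X}_i$, I will reproduce the integration by parts leading to (\ref{l3}), namely
\begin{equation*}
\mathcal{X}_i=4\int_\Omega u_i\sum_p\Delta X_p\,\nabla X_p\cdot\nabla u_i+\int_\Omega\sum_p(\Delta X_p)^2u_i^2+4\int_\Omega\sum_p(\nabla X_p\cdot\nabla u_i)^2-2\int_\Omega\sum_p|\nabla X_p|^2\,u_i\Delta u_i,
\end{equation*}
and then simplify termwise: the first term vanishes because $\sum_p X_p\nabla X_p=0$; the second equals $\lambda^2\int u_i^2=\lambda^2$; the fourth equals $-2\lambda\int u_i\Delta u_i=2\lambda\int|\nabla u_i|^2$. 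For $\mathcal{Y}_i$, an analogous expansion produces the same three species of terms, with the first again vanishing, the constant one giving $\lambda^2$ and the gradient one giving $4\int\sum_p(\nabla X_p\cdot\nabla u_i)^2$.

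The one step that requires a genuine inequality (rather than an identity, as was available in the Euclidean immersion setting via orthonormality of coordinate functions) is the control of $\sum_p(\nabla X_p\cdot\nabla u_i)^2$. I will establish pointwise
\begin{equation*}
\sum_{p=1}^m(\nabla X_p\cdot\nabla u_i)^2\le\Bigl(\sum_{p=1}^m|\nabla X_p|^2\Bigr)|\nabla u_i|^2=\lambda|\nabla u_i|^2,
\end{equation*}
which follows from the fact that the symmetric matrix $\sum_p\nabla X_p\otimes\nabla X_p$ has operator norm bounded by its trace. Combined with the Cauchy--Schwarz bound $\int_\Omega|\nabla u_i|^2=\int_\Omega u_i(-\Delta u_i)\le\lambda_i^{1/2}$ (already used in (\ref{m3})), these give $\mathcal{X}_i\le\lambda(\lambda+6\lambda_i^{1/2})$ and $\mathcal{Y}_i\le\lambda(\lambda+4\lambda_i^{1/2})$.

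To conclude, I plug $\mathcal{C}_i=2\lambda$, together with the above estimates of $\mathcal{X}_i$ and $\mathcal{Y}_i$, into (\ref{abst}). The left-hand side becomes $4\lambda^2\big(\sum_{i=1}^kf(\lambda_i)\big)^2$, and the two factors $\lambda$ on the right produce exactly one $\lambda^2$ that cancels with $4\lambda^2$ up to the factor $4$; taking square roots yields precisely inequality~(\ref{h}). The main potential pitfall is therefore not a computational one but the dimensional-constant tracking: one must check that the cancellation of $\lambda$ between $\mathcal{C}_i^2$ on one side and $\mathcal{X}_i\mathcal{Y}_i$ on the other produces the clean bound stated, and that the Cauchy--Schwarz inequality on $\sum_p(\nabla X_p\cdot\nabla u_i)^2$ is responsible for the $6\lambda_i^{1/2}$ coefficient (from $4+2$) in the first bracket of~(\ref{h}) while the second bracket retains the coefficient $4\lambda_i^{1/2}$.
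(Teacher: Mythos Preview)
Your proposal is correct and follows essentially the same route as the paper: apply Theorem~\ref{abstract form 1} with $A=\Delta^2$, $B_p=X_p$, $T_p=[\Delta,X_p]$, use the eigenmap identities $\sum_p X_p^2=1$, $\sum_p|\nabla X_p|^2=\lambda$ (hence $\sum_p X_p\nabla X_p=0$) to evaluate the commutator sums, and bound $\sum_p(\nabla X_p\cdot\nabla u_i)^2\le\lambda|\nabla u_i|^2$ together with $\int|\nabla u_i|^2\le\lambda_i^{1/2}$. One cosmetic slip: in the paper's sign convention the Laplacian is negative, so the eigenmap relation reads $\Delta X_p=-\lambda X_p$ rather than $\Delta X_p=\lambda X_p$; this has no effect on your argument since the only place the sign could enter (the cross term $\sum_p\Delta X_p\,\nabla X_p\cdot\nabla u_i$) vanishes anyway, and $(\Delta X_p)^2=\lambda^2 X_p^2$ is insensitive to it.
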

\begin{proof}
As in the proof of Theorem \ref{clamp1}, we apply Theorem \ref{abstract form 1} with $A=\Delta^2$, $B_p=X_p$ and $T_p=[\Delta,X_p]$, $p=1,\ldots,m$, to obtain
\begin{align}\label{9'}
\displaystyle\bigg( \sum_{i=1}^k
\sum_{p=1}^mf(\lambda_i&)\langle[[\Delta,X_p],
X_p]u_i,u_i\rangle_{L^2}\bigg)^2 \nonumber\\
\leq 4\bigg[\displaystyle \sum_{i=1}^k
\sum_{p=1}^m g(\lambda_i) \langle [\Delta^2,X_p]u_i, X_p
u_i\rangle_{L^2}\bigg] & \bigg[\displaystyle \sum_{i=1}^k \sum_{p=1}^m
\frac{\big(f(\lambda_i)\big)^2}{g(\lambda_i)(\lambda_{k+1}-\lambda_i)} \|[\Delta,X_p] u_i\|_{L^2}^2\bigg],
\end{align}
where $\{u_i\}_{i=1}^{\infty}$ is a complete $L^2$-orthonormal basis of eigenfunctions of $\Delta^2$ associated to $\{\lambda_i\}_{i=1}^{\infty}$.
 As before, we have
\begin{equation*}
\sum_{p=1}^m \langle [[\Delta,X_p],X_p]u_i,u_i\rangle_{L^2}
= 2\sum_{p=1}^m \int_{\Omega} \arrowvert \nabla X_p \arrowvert^2u_i^2.
\end{equation*}
Since  \begin{equation}\label{6'}\sum_{p=1}^m\arrowvert \nabla X_p\arrowvert^2=\lambda,\, \sum_{p=1}^m X_p^2=1\,\, \hbox{and}\,\, \Delta X_p=-\lambda X_p, \end{equation} we have
\begin{equation}\label{i}
\sum_{p=1}^m \langle [[\Delta,X_p],X_p]u_i,u_i\rangle_{L^2}
=2\lambda
\end{equation}
and
\begin{align}
\sum_{p=1}^m\|[\Delta,X_p] u_i\|_{L^2}^2&=\sum_{p=1}^m\int_{\Omega}\Big([\Delta,X_p]u_i\Big)^2 \label{j}\\
=4\int_{\Omega}\sum_{p=1}^m(\nabla X_p.\nabla u_i)^2+\int_{\Omega}\sum_{p=1}^m& (\Delta X_p)^2 u_i^2+4\int_{\Omega}\sum_{p=1}^m u_i \Delta X_p \nabla X_p.\nabla u_i\nonumber\\
\leq \label{z} 4\int_{\Omega}\sum_{p=1}^m \arrowvert \nabla X_p\arrowvert^2\arrowvert\nabla u_i\arrowvert^2+\lambda^2\int_{\Omega}&\Big(\sum_{p=1}^m X_p^2\Big)u_i^2-2\lambda\int_{\Omega} u_i\nabla\Big(\sum_{p=1}^m X_p^2\Big).\nabla u_i\\
= 4\lambda \int_{\Omega}& (-\Delta u_i)u_i +\lambda^2\nonumber\\
\leq 4\lambda \Big(\int_{\Omega}& (-\Delta u_i)^2\Big)^{\frac{1}{2}} \Big(\int_{\Omega}u_i^2\Big)^{\frac{1}{2}} +\lambda^2\label{5'}\\
= 4\lambda \lambda_i^{\frac{1}{2}}&+\lambda^2 \label{k},
    \end{align}
where we used the Cauchy-Schwarz inequality to obtain (\ref{z}) and (\ref{5'}).\\ Similarly we infer, from identities (\ref{l3}) and (\ref{6'}), 
\begin{align}
 \sum_{p=1}^m \langle[\Delta^2,X_p]u_i,X_p u_i\rangle_{L^2}
=&\lambda^2 \int_{\Omega}u_i^2 -\lambda\int_{\Omega}\nabla\Big(\sum_{p=1}^m X_p^2\Big).\nabla u_i^2\nonumber\\ &+4\sum_{p=1}^m\int_{\Omega}\Big(\nabla X_p.\nabla u_i\Big)^2+2\lambda\int_{\Omega}(-\Delta u_i)u_i\nonumber\\
\leq \lambda^2 +4\int_{\Omega}\sum_{p=1}^m&\arrowvert\nabla X_p\arrowvert^2\arrowvert\nabla u_i\arrowvert^2+2\lambda\Big(\int_{\Omega}\big(-\Delta u\big)^2\Big)^{\frac{1}{2}}\Big(\int_{\Omega}u_i^2\Big)^{\frac{1}{2}}\nonumber\\
\leq&\lambda^2+4\lambda\lambda_i^{\frac{1}{2}}+2\lambda\lambda_i^{\frac{1}{2}}\nonumber\\
=&\lambda^2+6\lambda\lambda_i^{\frac{1}{2}} \label{m},
\end{align}
Incorporating (\ref{i}), (\ref{k}) and (\ref{m}) in inequality (\ref{9'}), we get the statement of the theorem.
\end{proof}
An immediate consequence of Theorem \ref{eigenmap} is the following
\begin{corollary}
Let $(M,g)$ be a compact homogeneous Riemannian manifold without boundary and let $\lambda_{1}$ be the first non zero eigenvalue of its Laplacian . Then inequality of Theorem \ref{eigenmap} holds with $\lambda=\lambda_{1}$.
\end{corollary}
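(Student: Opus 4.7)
The plan is to reduce the corollary to a direct application of Theorem~\ref{eigenmap}, so the entire content lies in producing an eigenmap associated to $\lambda_1$. Concretely, I would argue that since $(M,g)$ is a compact homogeneous Riemannian manifold, it admits an eigenmap $X:(M,g)\longrightarrow \mathbb{S}^{m-1}$ associated to every positive eigenvalue of its Laplacian; this is the classical result of Li recalled in the preamble of Section~3 (reference \cite{Li}). Applied to $\lambda_1$, this yields components $X_1,\dots,X_m$ satisfying
\begin{equation*}
\Delta X_p=-\lambda_1 X_p,\quad \sum_{p=1}^m X_p^2=1,\quad \sum_{p=1}^m|\nabla X_p|^2=\lambda_1.
\end{equation*}

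Once such an $X$ is in hand, I would simply invoke Theorem~\ref{eigenmap}: given any bounded domain $\Omega\subset M$, any $k\ge 1$ with $\lambda_{k+1}>\lambda_k$ and any $(f,g)\in \Im_{\lambda_{k+1}}$, inequality~(\ref{h}) holds with the parameter $\lambda$ replaced by $\lambda_1$. Since $M$ is itself compact without boundary, one may also take $\Omega$ to be any relatively compact open subset, or remark that the hypotheses of Theorem~\ref{eigenmap} are purely local in $\Omega$ and hence transparent to the global structure of $M$.

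The main (and only) obstacle is really a bibliographic one, namely to cite or sketch why a compact homogeneous manifold supports an eigenmap for each positive eigenvalue: one fixes an orthonormal basis $\{\varphi_1,\dots,\varphi_m\}$ of the $\lambda$-eigenspace $E_\lambda$ and uses the transitivity of the isometry group together with the fact that $E_\lambda$ is invariant under this group action to show that $\sum_p \varphi_p^2$ is a nonzero constant on $M$; after renormalizing, the map $X=(\varphi_1,\dots,\varphi_m)/\sqrt{\sum_p \varphi_p^2}$ takes values in $\mathbb{S}^{m-1}$ and has components which are still eigenfunctions for $\lambda$. Beyond this, no calculation is required, and the corollary follows.
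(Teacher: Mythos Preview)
Your proposal is correct and follows exactly the approach of the paper: the corollary is stated there as ``an immediate consequence of Theorem~\ref{eigenmap}'', relying solely on the fact (cited from \cite{Li} in the preamble of Section~3) that compact homogeneous Riemannian manifolds admit eigenmaps for every positive eigenvalue of their Laplacian. Your sketch of why $\sum_p \varphi_p^2$ is constant via the transitive isometry action is in fact more than the paper provides.
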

\begin{remark}
As before, one can get a similar result for the operator $\Delta^2+q$. 
\end{remark}
%%%%%%%%%%%%%%%%%%%%%%%%%%%%%%%%%%%%%%%%%%%%%%%%%%%%%%%%%%%%%%%%%%%%%%%%%%%%%%%%%%%
\section{domains in the hyperbolic space}
We turn next to the case of a domain $\Omega$ of a hyperbolic space. It is easy to establish a universal inequality for eigenvalues of the clamped plate problem (\ref{clamped prob})on $\Omega$ in the vein of the preceding ones. Unfortunately, until now we have not succeeded in obtaining a simple generalization for the case of domains of hyperbolic submanifolds. In what follows, we take the half-space model for $\mathbb{H}^n$ i.e
\begin{equation*}
 \mathbb{H}^n=\{x=(x_1,x_2,\ldots,x_n) \in \R^n;\,x_n > 0 \}
\end{equation*}
 with the standard metric
\begin{equation*}
 ds^2=\frac{dx_1^2+dx_2^2+\ldots+dx_n^2}{x_n^2}.
\end{equation*}
We note that in terms of the coordinates $(x_{i})_{i=1}^{n}$, the Laplacian of $\mathbb{H}^n$ is given by
\begin{equation*}
 \Delta=x_n^2\sum_{j=1}^n \frac{\partial^2}{\partial x_j \partial x_j}+(2-n)x_n\frac{\partial}{\partial x_n}.
\end{equation*}
\begin{theorem}\label{HYP} For any $k\ge 1$ such that $\lambda_{k+1}>\lambda_k$, the eigenvalues $\lambda_i's$ of the clamped problem (\ref{clamped prob}) on the bounded domain $\Omega$ of $\mathbb{H}^n$ must satisfy, for any $(f,g)  \in \Im_{\lambda_{k+1}}$,
\begin{align}
 \sum_{i=1}^k f(\lambda_i) \leq\bigg[\sum_{i=1}^k&g(\lambda_i)\Big(6\lambda_i^{\frac{1}{2}}-(n-1)^2\Big)\bigg]^{\frac{1}{2}}\times\nonumber\\ &\bigg[\sum_{i=1}^k\bigg(\frac{(f(\lambda_i))^2}{g(\lambda_i)(\lambda_{k+1}-\lambda_i)}\bigg)\Big(4\lambda_i^{\frac{1}{2}}-(n-1)^2\Big)\bigg]^{\frac{1}{2}},
\end{align}
\end{theorem}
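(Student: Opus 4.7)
The plan is to apply the abstract algebraic inequality of Theorem \ref{abstract form 1} in the simplest possible configuration: a single symmetric operator $B_1=X$ (so the sum over $p$ collapses to one term), with the ``coordinate function''
$$X := \ln x_n,$$
together with $A=\Delta^2$ and $T_1=[\Delta,X]$. This choice is motivated by the observation that in the half-space model the function $\ln x_n$ has \emph{constant} hyperbolic Laplacian and \emph{constant} gradient norm, which is exactly what is needed to play the role previously played by the embedding components $X_p$ in Theorems~\ref{clamp1} and \ref{eigenmap}.

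The first step is a direct computation from the explicit form of $\Delta$ given just above the statement of the theorem, yielding
$$\Delta X = 1-n, \qquad |\nabla X|^2 = 1.$$
With these two identities, the commutator calculations mirror those of the previous sections. Using $[\Delta,X]u_i = 2\nabla X\cdot\nabla u_i + u_i\,\Delta X$, I would compute:
\begin{itemize}
\item $\langle[[\Delta,X],X]u_i,u_i\rangle_{L^2} = 2\int_\Omega |\nabla X|^2 u_i^2 = 2$, so the left-hand side of (\ref{abst}) becomes $\bigl(2\sum_i f(\lambda_i)\bigr)^2$.
\item $\|[\Delta,X]u_i\|_{L^2}^2 = 4\int(\nabla X\cdot\nabla u_i)^2 + 4(1-n)\int u_i\nabla X\cdot\nabla u_i + (1-n)^2$. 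The middle integral equals $-\tfrac{1}{2}(1-n)\int\Delta X\, u_i^2 = \tfrac{(n-1)(1-n)}{2}$ after integration by parts, and the first integral is bounded above by $\int|\nabla u_i|^2 \le \lambda_i^{1/2}$ via Cauchy--Schwarz on $|\nabla X|^2=1$ and then on $\int u_i(-\Delta u_i)$. This yields $\|[\Delta,X]u_i\|_{L^2}^2 \le 4\lambda_i^{1/2}-(n-1)^2$.
\item $\langle[\Delta^2,X]u_i,Xu_i\rangle_{L^2}$ is handled via the general identity (\ref{l3}) already established in the proof of Theorem~\ref{clamp1}, specialized to the scalar $X=\ln x_n$; the four terms evaluate to $-2(n-1)^2$, $(n-1)^2$, at most $4\lambda_i^{1/2}$, and $2\lambda_i^{1/2}$ respectively, summing to at most $6\lambda_i^{1/2}-(n-1)^2$.
\end{itemize}

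Plugging these three estimates into (\ref{abst}), the factor $4$ on the right of (\ref{abst}) cancels the factor $4$ coming from $\bigl(2\sum f(\lambda_i)\bigr)^2$ on the left, and taking square roots delivers exactly the inequality asserted in Theorem~\ref{HYP}.

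The only genuinely non-routine step is the initial guess of $X=\ln x_n$: the method of the paper requires a function whose Laplacian and gradient norm are both controlled pointwise, and in hyperbolic space the height coordinate through a logarithm is the natural candidate. Once this is recognized, the computations are entirely parallel to those in Sections~2 and 3, with the signs $-(n-1)^2$ in the final bound tracing back to the negative value $\Delta X = 1-n$ rather than to any curvature term. (Tacitly one uses $4\lambda_i^{1/2}\ge (n-1)^2$, which is ensured by the lower bound $\lambda_1\ge (n-1)^4/16$ for the clamped biharmonic on $\mathbb{H}^n$.)
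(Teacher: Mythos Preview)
Your proposal is correct and follows exactly the paper's approach: the paper also applies Theorem~\ref{abstract form 1} with the single function $F=\ln x_n$, uses $\Delta F=1-n$ and $|\nabla F|^2=1$, and derives the same three quantities $2$, $4\lambda_i^{1/2}-(n-1)^2$, $6\lambda_i^{1/2}-(n-1)^2$ (the paper recomputes the $[\Delta^2,F]$ term from scratch rather than invoking (\ref{l3}), but this is purely cosmetic). Your displayed value ``$\tfrac{(n-1)(1-n)}{2}$'' for the integral $\int_\Omega u_i\,\nabla X\cdot\nabla u_i$ is a slip---the correct value is $\tfrac{n-1}{2}$---but the final bound $\|[\Delta,X]u_i\|_{L^2}^2\le 4\lambda_i^{1/2}-(n-1)^2$ you state is right.
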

\begin{proof}
Theorem \ref{abstract form 1} remains valid for $A=\Delta^2$, $B_p=F= \ln x_n$ and $T_p=[\Delta,F]$, for all $p=1,\ldots,n$. Thus, denoting by $u_i$ the eigenfunction corresponding to $\lambda_i$, we have
\begin{align}\label{12'}
\displaystyle\bigg(\sum_{i=1}^k
f(\lambda_i)&\langle[[\Delta,F],
F]u_i,u_i\rangle_{L^2}\bigg)^2\nonumber\\ \leq \displaystyle4\bigg[\sum_{i=1}^kg(\lambda_i) \langle [\Delta^2,F]u_i, F
u_i\rangle_{L^2}\bigg]&\bigg[\displaystyle \sum_{i=1}^k
 \bigg(\frac{(f(\lambda_i))^2}{g(\lambda_i)(\lambda_{k+1}-\lambda_i)}\bigg)\|[\Delta,F] u_i\|_{L^2}^2\bigg].
\end{align}
Let us start by the calculation of
\begin{align*}
\langle [[\Delta,F],F]u_i,u_i \rangle_{L^2}=& \int_{\Omega}\bigg([\Delta,F](Fu_i)-F[\Delta,F]u_i\bigg)u_i\nonumber\\
=&\int_{\Omega}\Big(\Delta(F^2u_i)-2F\Delta(Fu_i)+F^2\Delta u_i\Big)u_i.
\end{align*}
We note that \begin{equation}\label{10'}\Delta F =1-n\,\,\hbox{and} \,\,|\nabla F|^2=1.\end{equation} Thus a direct calculation gives
\begin{align}\label{e}
\langle [[\Delta,F],F]u_i,u_i \rangle_{L^2}= 2\int_{\Omega}|\nabla F|^2u_i^2=2.
\end{align}
On the other hand, using again identities (\ref{10'}), we obtain
\begin{align}
\|[\Delta,F]u_i\|_{L^2}^2=&\int_{\Omega}(\Delta F u_i+2 \nabla F.\nabla u_i)^2\nonumber\\
=&\int_{\Omega}(\Delta F)^2u_i^2+4\int_{\Omega}(\nabla F.\nabla u_i)^2+4\int_{\Omega}\Delta F u_i \nabla F.\nabla u_i\\
=&(1-n)^2+4\int_{\Omega}(\nabla F.\nabla u_i)^2+4(1-n)\int_{\Omega}u_i \nabla F.\nabla u_i.\label{11'}
\end{align}
But \begin{equation*}\int_{\Omega} u_i\nabla F.\nabla u_i=-\int_{\Omega} u_i\nabla F.\nabla u_i - \int_{\Omega}u_i^2 \Delta F,\end{equation*}
hence \begin{equation}\label{c}\int_{\Omega}u_i \nabla F.\nabla u_i=\frac{n-1}{2}.\end{equation} Then we infer, from (\ref{10'}), (\ref{11'}) and (\ref{c}),
\begin{align}
\|[\Delta,F]u_i\|_{L^2}^2 \leq & -(n-1)^2+4\int_{\Omega}|\nabla F|^2|\nabla u_i|^2 \nonumber \\
=&-(n-1)^2+4\int_{\Omega}|\nabla u_i|^2 \nonumber\\
=&-(n-1)^2+4\int_{\Omega}u_i(-\Delta u_i)\nonumber\\
\leq & -(n-1)^2+4 \bigg(\int_{\Omega}u_i^2\bigg)^{\frac{1}{2}}\bigg(\int_{\Omega}(-\Delta u_i)^2\bigg)^{\frac{1}{2}}\label{a}\\
=& 4\lambda_i^{\frac{1}{2}}-(n-1)^2\label{b}
\end{align}
Now,
\begin{align}
[\Delta^2,F]u_i=&\Delta^2(Fu_i)-F\Delta^2u_i\nonumber\\
=&\Delta(\Delta F u_i+2\nabla F.\nabla u_i+F\Delta u_i)-F\Delta^2u_i \nonumber\\
=&2(1-n)\Delta u_i+2\Delta(\nabla F.\nabla u_i)+2 \nabla F.\nabla \Delta u_i,
\end{align}
 thus
\begin{align*}
 \langle[\Delta^2,F]u_i&,Fu_i\rangle_{L^2}\nonumber\\
=2(1-n)\int_{\Omega}Fu_i\Delta u_i+2\int_{\Omega}Fu_i\Delta&(\nabla F.\nabla u_i)+2\int_{\Omega}Fu_i\nabla F.\nabla \Delta u_i\nonumber\\
=2(1-n)\int_{\Omega}Fu_i\Delta u_i+2\int_{\Omega}\Delta (Fu_i)&\nabla F.\nabla u_i -2\int_{\Omega}div(Fu_i\nabla F)\Delta u_i\nonumber\\
=2\int_{\Omega}\Delta Fu_i\nabla F.\nabla u_i +4\int_{\Omega}(&\nabla F.\nabla u_i)^2-2\int_{\Omega}|\nabla F|^2u_i \Delta u_i.
\end{align*}
We infer, from (\ref{10'}) and (\ref{c}),
\begin{align}\label{d}
\langle[\Delta^2,F]u_i,Fu_i\rangle_{L^2}\leq & -(n-1)^2+4\int_{\Omega}|\nabla F|^2|\nabla u_i|^2+2\int_{\Omega}u_i(-\Delta u_i) \nonumber\\
=&-(n-1)^2+6\int_{\Omega}u_i(-\Delta u_i)\nonumber\\
\leq & 6\bigg(\int_{\Omega}u_i^2\bigg)^{\frac{1}{2}}\bigg(\int_{\Omega}(-\Delta u_i)^2\bigg)^{\frac{1}{2}}-(n-1)^2\nonumber\\
=&6 \lambda_i^{\frac{1}{2}}-(n-1)^2.
\end{align}
Inequality (\ref{12'}) along with (\ref{e}), (\ref{b}) and (\ref{d}) gives the statement of the theorem.
\end{proof}
\begin{remark} \begin{itemize}
\item It will be interesting to look for an extension of Theorem \ref{HYP} to domains of hyperbolic submanifolds.
\item Note that our method works for any bounded domain $\Omega$ of a Riemannian manifold admitting a function such that $|\nabla h|$ is constant and $|\Delta h|\le C$, where $C$ is a constant.

\item As before, we observe that we have the same statement as in Theorem \ref{HYP} for the operator $\Delta^{2}+q$ (it suffices to replace $\lambda_{i}^{\frac{1}{2}}$ by $\overline{\lambda_{i}}^{\frac{1}{2}}$).
\end{itemize}\end{remark}

\end{document}